\documentclass[number,sort&compress]{elsarticle}

\usepackage{amssymb,amsmath,amsthm,url,placeins}
\usepackage[hidelinks]{hyperref}
\usepackage{cleveref}

\crefname{equation}{}{}
\Crefname{equation}{Equation}{Equations}
\crefname{theorem}{Theorem}{Theorems}
\Crefname{theorem}{Theorem}{Theorems}
\crefname{lemma}{Lemma}{Lemmas}
\Crefname{lemma}{Lemma}{Lemmas}
\crefname{proposition}{Proposition}{Propositions}
\Crefname{proposition}{Proposition}{Propositions}
\crefname{corollary}{Corollary}{Corollaries}
\Crefname{corollary}{Corollary}{Corollaries}
\crefname{conjecture}{Conjecture}{Conjectures}
\Crefname{conjecture}{Conjecture}{Conjectures}
\crefname{section}{Section}{Sections}
\Crefname{section}{Section}{Sections}
\crefname{example}{Example}{Examples}
\Crefname{example}{Example}{Examples}
\crefname{problem}{Problem}{Problems}
\Crefname{problem}{Problem}{Problems}
\crefname{table}{Table}{Tables}
\Crefname{table}{Table}{Tables}
\crefname{remark}{Remark}{Remarks}
\Crefname{remark}{Remark}{Remarks}
\crefname{definition}{Definition}{Definitions}
\Crefname{definition}{Definition}{Definitions}

\newcommand{\ZZ}{\mathbb{Z}}

\newcommand{\s}{\mathsf{s}}

\newtheorem{theorem}{Theorem}


\theoremstyle{definition}




\title{Modified {E}rd\H{o}s--{G}inzburg--{Z}iv constants for $\ZZ_2^{\lowercase{d}}$
}

\author{Alexander Sidorenko}
\ead{sidorenko.ny@gmail.com}
\address{R\'{e}nyi Institute, Budapest, Hungary}

\date{\today}

\begin{document}

\begin{abstract}
Let $G$ be a finite abelian group written additively, 
and let $r$ be a multiple of its exponent. 
The modified Erd\H{o}s--Ginzburg--Ziv constant $\s_r'(G)$
is the smallest integer $s$
such that every zero-sum sequence of length $s$ over $G$ 
has a zero-sum subsequence of length $r$. 
We find exact values of $\s_{2k}'(\mathbb{Z}_2^d)$ for $d \leq 2k+1$. 
\end{abstract}

\begin{keyword}
Erd\H{o}s--Ginzburg--Ziv constant \sep zero-sum sequence
\MSC[2010]{05C35 \sep 20K01}
\end{keyword}

\maketitle


Let $G$ be a finite abelian group written additively. 
We denote by $\exp(G)$ the {\it exponent} of $G$
that is the least common multiple of the orders of its elements. 
Let $r$ be a multiple of $\exp(G)$. 
The \emph{generalized Erd\H{o}s--Ginzburg--Ziv constant} $\s_r(G)$ 
is the smallest integer $s$
such that every sequence of length $s$ over $G$ 
has a zero-sum subsequence of length $r$. 
If $r = \exp(G)$, then $\s(G)=\s_{\exp(G)}(G)$ 
is the classical Erd\H{o}s--Ginzburg--Ziv constant. 
The constants $\s_r(G)$ have been studied extensively, 
see for example 
\cite{Bitz:2020,Gao:2003,Gao:2014,Gao:2006,Han:2018,Han:2019,He:2016,Sidorenko:2020}. 
The following variation of these constants was introduced in \cite{Augspurger:2017} 
and further studied in \cite{Berger:2019,Berger:2019b,Hu:2023}. 
The \emph{modified Erd\H{o}s--Ginzburg--Ziv constant} $\s_r'(G)$ 
is the smallest integer $s$
such that every \emph{zero-sum} sequence of length $s$ over $G$ 
has a zero-sum subsequence of length $r$. 

By the definition, $\s_r'(G) \leq \s_r(G)$. 
On the other hand, if $g_1,g_2,\ldots,g_s$ is a sequence over $G$
that does not contain a zero-sum subsequence of size $r$, 
and $s$ is mutually prime with $\exp(G)$, 
then there exists $x \in G$ such that 
$g_1+x,g_2+x,\ldots,g_s+x$ is a zero-sum subsequence 
(see \cite{Augspurger:2017,Hu:2023}).  
Thus, $\s_r'(G) \geq \s_r(G) - (\exp(G) - 1)$,
and if $\s_r(G)-1$ is mutually prime with $\exp(G)$, 
then $\s_r'(G) = \s_r(G)$. 

In this note, we consider the case $\exp(G)=2$, so $G\cong\ZZ_2^d$. 
By the abovementioned argument,
\begin{equation}\label{eq:equal}
  \s_r'(\ZZ_2^d) = \s_r(\ZZ_2^d) \;\;\;{\rm if}\; 
    \s_r(\ZZ_2^d) \;{\rm is\; even},
\end{equation}
and 
\begin{equation}\label{eq:ineq}
  \s_r(\ZZ_2^d)-1 \:\leq\: \s_r'(\ZZ_2^d) \:\leq\: \s_r(\ZZ_2^d).
\end{equation}
The exact values of generalized Erd\H{o}s--Ginzburg--Ziv constants 
$\s_{2k}(\ZZ_2^d)$ have been found for $d \leq 2k+1$: 

\begin{theorem}[\cite{Sidorenko:2020}]\label{th:T1}
\[
  \s_{2k}(\ZZ_2^d) = \begin{cases}
  2k+d \;\;\;{\rm for}\;\; d < 2k; \\
  4k+1 \;\;\;{\rm for}\;\; d = 2k; \\
  4k+2 \;\;\;{\rm for}\;\; d = 2k+1,\;\; k \;{\rm is\; even}; \\
  4k+5 \;\;\;{\rm for}\;\; d = 2k+1,\;\; k \;{\rm is\; odd}.
  \end{cases}
\]
\end{theorem}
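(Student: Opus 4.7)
The plan is to establish matching upper and lower bounds for $\s_{2k}(\ZZ_2^d)$ in each of the four ranges of $d$. The lower bounds come from explicit constructions of sequences falling one short of the claimed value and having no zero-sum subsequence of length exactly $2k$. For $d<2k$, one takes $2k-1$ copies of $0$ together with the standard basis vectors $e_1,\ldots,e_d$: since the $e_i$ are linearly independent, every zero-sum subsequence consists purely of zeros, and there are only $2k-1$ of them. For $d=2k$, one adjoins the extra vector $e_1+\cdots+e_{2k}$, producing a sequence of length $4k$ whose only nontrivial zero-sum among the nonzero terms has size $2k+1$, so no zero-sum of length exactly $2k$ can be assembled. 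The same template with $\{e_1,\ldots,e_{2k+1},\,e_1+\cdots+e_{2k+1}\}$ resolves $d=2k+1$ with $k$ even. The anomalous case $d=2k+1$ with $k$ odd requires a genuinely different construction of length $4k+4$; when $k=1$ it is simply the entire group $\ZZ_2^3$ (eight distinct vectors, so no zero-sum of length two), and for larger odd $k$ one expects a parity-based construction in which a suitable weight modulo~$4$ obstructs zero-sums of length $2k$.

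For the upper bounds, the central tool is the Davenport constant $D(\ZZ_2^d)=d+1$, which guarantees that every sequence of $d+1$ vectors in $\ZZ_2^d$ has a nontrivial zero-sum subsequence of length between $2$ and $d+1$. Given an input sequence of the claimed length, I would repeatedly extract disjoint short zero-sums and try to combine them into one of total length exactly $2k$, using the fact that disjoint unions of zero-sums are themselves zero-sums in $\ZZ_2^d$. For $d<2k$ this reduces to a subset-sum exercise on the extracted lengths together with the pool of zero entries, and elementary pigeonhole suffices once one tracks zeros versus nonzero elements separately. For $d\in\{2k,2k+1\}$ the range of extractable lengths widens and a more careful case analysis is required, most naturally through a minimal-counterexample argument that forces strong structural constraints (many repeated vectors, low-dimensional affine support of the nonzero entries) and then contradicts one of them.

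The main obstacle is the case $d=2k+1$ with $k$ odd, which accounts for the jump from $4k+2$ to $4k+5$. Both the sharpness of the lower bound and the validity of the upper bound in this case must rest on a parity invariant specific to odd $k$: typically an $\mathbb{F}_2$-linear functional or a mod-$4$ weight quantity that forbids zero-sum subsequences of length $2k$ unless the surrounding sequence is long enough. Identifying this invariant, matching it to an explicit extremal example, and ruling out any shorter counterexample is, I expect, where the bulk of the technical work lies; the other three cases ought to be comparatively routine once the constructions and the Davenport-based extraction are set up.
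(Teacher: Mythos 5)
You should first note that \cref{th:T1} is imported from \cite{Sidorenko:2020}: the present paper gives no proof of it, so your attempt can only be judged on its own merits, and as written it is an outline with two substantive gaps rather than a proof. On the lower-bound side, your constructions for $d<2k$, for $d=2k$, and for $d=2k+1$ (which gives $\s_{2k}(\ZZ_2^{2k+1})\geq 4k+2$ for every $k$) are correct and easy to verify. But for $d=2k+1$ with $k$ odd you need a sequence of length $4k+4$ with no zero-sum subsequence of length $2k$, and you exhibit one only for $k=1$; for $k\geq 3$ you merely posit that ``a parity-based construction'' should exist. Such a sequence does exist --- the present paper writes one down inside the proof of \cref{th:T2}, namely $0,\,e_1,\ldots,e_{d-1},\,e_1+\cdots+e_{d-1},\,e_d,\,e_d+e_1,\ldots,e_d+e_{d-1},\,e_d+e_1+\cdots+e_{d-1}$ --- but checking that it admits no zero-sum subsequence of length $2k$ precisely when $k$ is odd requires a counting argument modulo $4$ that you have not supplied, and identifying the right example is part of the content of the theorem.

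The larger gap is that none of the upper bounds is actually proved. Repeatedly applying $D(\ZZ_2^d)=d+1$ yields disjoint nontrivial zero-sum subsequences of lengths somewhere in $\{1,\ldots,d+1\}$, but you have no control over which lengths occur, so assembling a disjoint union of total length exactly $2k$ is not ``a subset-sum exercise'' settled by ``elementary pigeonhole''; that assembly problem is exactly where the difficulty lives, and for $d\in\{2k-1,2k,2k+1\}$ greedy extraction does not reach the stated bounds (indeed it cannot explain why the answer at $d=2k+1$ depends on the parity of $k$). The argument in \cite{Sidorenko:2020} does not proceed this way: it reformulates the problem in terms of binary linear codes --- the index sets of zero-sum subsequences of $x_1,\ldots,x_n$ form a linear code of dimension at least $n-d$, and the question becomes whether every such code must contain a codeword of weight exactly $2k$ --- and the parity phenomenon at $d=2k+1$ emerges from the weight-distribution analysis there. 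So your proposal correctly identifies three of the four extremal examples but leaves both the fourth construction and the entire upper-bound half of the theorem unestablished.
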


In the present note, we extend this result to the \emph{modified} 
Erd\H{o}s--Ginzburg--Ziv constants. 

\begin{theorem}\label{th:T2}
Let $d \leq 2k+1$. 
Then $\s_{2k}'(\ZZ_2^d) = \s_{2k}(\ZZ_2^d)-1$ in the following cases: 
\begin{itemize}
\item 
$d=2k-1$;
\item 
$d=2k-3$, $k$ is even;
\item 
$d \leq 2k-5$, $d$ is odd.
\end{itemize}
In all other cases, $\s_{2k}'(\ZZ_2^d) = \s_{2k}(\ZZ_2^d)$.
\end{theorem}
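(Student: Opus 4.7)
From Theorem~\ref{th:T1}, the value $\s_{2k}(\ZZ_2^d)$ is even precisely when $d<2k$ is even, or when $d=2k+1$ and $k$ is even; in these cases~\eqref{eq:equal} already gives $\s_{2k}'(\ZZ_2^d)=\s_{2k}(\ZZ_2^d)$, which agrees with Theorem~\ref{th:T2}. In every remaining case $\s_{2k}$ is odd, so by~\eqref{eq:ineq} the value $\s_{2k}'$ is one of $\s_{2k}-1$ or $\s_{2k}$, and the work is to decide which.

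The cases where the theorem claims $\s_{2k}'=\s_{2k}-1$ are $d=2k-1$, $d=2k-3$ with $k$ even, and $d\leq 2k-5$ odd. Writing $s=\s_{2k}(\ZZ_2^d)=2k+d$, the plan is to argue by contradiction: suppose $g_1,\ldots,g_{s-1}$ is a zero-sum sequence of length $s-1$ with no zero-sum $2k$-subsequence. Appending any $v\in\ZZ_2^d$ yields a length-$s$ sequence, which by Theorem~\ref{th:T1} must contain a zero-sum $2k$-subsequence; this subsequence has to use the appended~$v$, so the remaining $2k-1$ terms form a $(2k-1)$-subsequence of the original with sum~$v$. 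Hence every $v\in\ZZ_2^d$ is a $(2k-1)$-subsequence sum, and by complementation inside the length-$(s-1)$ zero-sum sequence, every $v$ is also a $d$-subsequence sum. The next step is to pair two $(2k-1)$-subsequences with the same sum: their symmetric difference has even cardinality and sums to zero, and by swapping elements in or out using the coverage property just established one aims to tune the size to exactly $2k$, equivalently to $d-1$ on the complement. The parity of $k$ becomes crucial at $d=2k-3$, since the admissible swaps change the size by an amount whose parity depends on $k\bmod 2$.

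For the cases where the theorem claims $\s_{2k}'=\s_{2k}$---namely $d=2k-3$ with $k$ odd, $d=2k$, and $d=2k+1$ with $k$ odd---the task is to exhibit explicit zero-sum sequences of length $\s_{2k}-1$ containing no zero-sum $2k$-subsequence. The standard extremal consisting of $(2k-1)$ copies of $0$ together with a basis $e_1,\ldots,e_d$ from~\cite{Sidorenko:2020} has total sum $e_1+\cdots+e_d$, and when $d$ is odd this obstruction survives every translation, so a genuinely different construction is needed. A natural family of candidates consists of a basis $e_1,\ldots,e_d$ together with carefully chosen multiplicities of $0$ and of $E=e_1+\cdots+e_d$, plus, for $d=2k$ or $d=2k+1$, a few extra vectors such as $e_i+e_j$, balanced so that the total sum vanishes while no admissible multiplicity assignment produces a zero-sum $2k$-subset. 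The verification reduces to a finite case check on the possible multiplicity profiles.

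The main obstacle is the parity dichotomy at $d=2k-3$, where the same relation between $d$ and $2k$ produces opposite answers depending on $k\bmod 2$; both the construction (for $k$ odd) and the non-existence argument (for $k$ even) must isolate the same parity-sensitive invariant. I expect this invariant to track the total sum together with an auxiliary parity that flips with $k\bmod 2$, so that the coverage-and-swapping step in the first argument just barely fails when $k$ is odd and allows the construction to go through.
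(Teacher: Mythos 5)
Your proposal is an outline rather than a proof: in both halves the decisive step is described as an aim and never carried out, and in both halves the paper's actual mechanism is different from the one you sketch. For the upper bounds ($\s_{2k}'=\s_{2k}-1$), your plan is to append an arbitrary $v$, deduce that every $v$ is a $(2k-1)$-subsequence sum, and then ``tune'' the size of a symmetric difference to $2k$ by swapping; you give no argument that the tuning can be done, and this is exactly where the difficulty lives, since for $d=2k-3$ with $k$ odd such tuning must be impossible. The paper avoids this entirely. For $d=2k-1$ it duplicates an element already in the sequence (sets $x_1=x_2$, not an arbitrary appended $v$), applies $\s_{2k}(\ZZ_2^{2k-1})=4k-1$ to get a zero-sum $2k$-set $A$, and then removes the duplicate by either exchanging index $1$ for index $2$ or passing to $(\{1,\dots,4k-1\}\setminus A)\cup\{2\}$; both moves need $x_1=x_2$, which an arbitrary append does not provide. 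For odd $d\leq 2k-3$ (with the stated parity restriction on $k$) it applies \cref{th:T1} with a \emph{different length parameter}: $\s_{d-1}(\ZZ_2^d)$ equals $2d$ or $2d+3$ according to $d\bmod 4$, and whenever this is at most $2k+d-1$ one finds a zero-sum subsequence of length $d-1$ whose complement is the desired zero-sum $2k$-subsequence. The parity condition on $k$ in the theorem is precisely the condition $\s_{d-1}(\ZZ_2^d)\leq 2k+d-1$ at the boundary $d=2k-3$; your heuristic about ``admissible swaps whose parity depends on $k\bmod 2$'' does not identify this mechanism.

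For the lower bounds ($\s_{2k}'=\s_{2k}$ in the cases $d=2k$, $d=2k+1$ with $k$ odd, $d=2k-3$ with $k$ odd), you propose a ``natural family of candidates'' and defer to ``a finite case check,'' but no sequence is exhibited and nothing is verified, so these cases are simply not done. For $d=2k$ your family does contain the right example ($2k-1$ copies of $0$, the basis, and $e_1+\cdots+e_{2k}$, of length $4k=\s_{2k}-1$), but for the odd-$k$ cases the paper's construction has a different shape from what you describe: it is the length-$(2d+2)$ sequence $0,e_1,\dots,e_{d-1},e_1+\cdots+e_{d-1}$ together with the translate of this whole block by $e_d$, i.e.\ $e_d, e_d+e_1,\dots,e_d+e_1+\cdots+e_{d-1}$. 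This is not of the form ``basis plus multiplicities of $0$ and $e_1+\cdots+e_d$ plus a few $e_i+e_j$.'' To complete your argument you would need to produce and verify explicit sequences for all three cases, and to replace the swapping heuristic with an actual argument; the complement-of-a-zero-sum-$(d-1)$-subsequence idea is the missing ingredient for the main range of $d$.
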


\begin{proof}[\bf{Proof}]
We start with the cases where we claim 
$\s_{2k}'(\ZZ_2^d) = \s_{2k}(\ZZ_2^d)$. 
Among them, cases $d < 2k$ with even $d$, and $d=2k+1$ with even $k$ 
follow from \cref{th:T1} and \cref{eq:equal}. 
The other three cases are 
$d=2k$, $d=2k+1$ with odd $k$, and $d=2k-3$ with odd $k$. 
Since $\s_{2k}'(\ZZ_2^d) \leq \s_{2k}(\ZZ_2^d)$, 
it is sufficient to construct 
a zero-sum sequence of length $\s_{2k}(\ZZ_2^d)-1$ 
that does not contain a zero-sum subsequence of length $2k$. 
For $d=2k$, we select a sequence of length $4k$ which consists of 
$2k-1$ copies of the zero vector, 
the $2k$ basis vectors $e_1,e_2,\ldots,e_{2k}$, 
and the vector $e_1+e_2+\ldots+e_{2k}$. 
For odd $k$ and $d=2k+1,2k-3$, 
we select a sequence of length $2d+2$ which consists of 
$0,\,e_1,\,e_2,\,\ldots,e_{d-1},\,e_1+e_2+\ldots+e_{d-1},\,
e_d,\,e_d+e_1,\,e_d+e_2,\,\ldots,e_d+e_{d-1},\,e_d+e_1+e_2+\ldots+e_{d-1}$. 

To solve the three cases where we claim 
$\s_{2k}'(\ZZ_2^d) = \s_{2k}(\ZZ_2^d)-1$, 
in the light of \cref{eq:ineq}, it is sufficient to prove that 
any zero-sum sequence of length $\s_{2k}(\ZZ_2^d)-1$ over $\ZZ_2^d$ 
contains a zero-sum subsequence of length $2k$. 
First consider the case $d=2k-1$. 
Let $x_2,x_3,\ldots,x_{4k-1} \in \ZZ_2^{2k-1}$ 
where $x_2+x_3+\ldots+x_{4k-1} = 0$. 
Set $x_1=x_2$. 
As $\s_{2k}(\ZZ_2^{2k-1}) = 4k-1$, 
there is $A\subset\{1,2,\ldots,4k-1\}$ such that $|A|=2k$ 
and $\sum_{i \in A} x_i = 0$. 
If $1 \notin A$, then we have found a zero-sum subsequence 
of length $2k$ among $x_2,x_3,\ldots,x_{4k-1}$. 
Suppose, $1 \in A$. 
If $2 \notin A$, then $(A \backslash \{1\}) \cup \{2\}$ 
points to a zero-sum subsequence of length $2k$. 
Suppose, $1,2 \in A$. 
Set $B := (\{1,2,\ldots,4k-1\} \backslash A) \cup \{2\}$. 
Then $|B|=2k$ and 
\begin{align*}
  \sum_{i \in B} x_i & = x_1+x_2+\ldots+x_{4k-1} - \sum_{i \in A} x_i + x_2
  \\ & = x_1+x_2 + (x_2+\ldots+x_{4k-1}) - \sum_{i \in A} x_i = x_1+x_2 = 0.     
\end{align*}
Finally, let $d$ be odd, 
and $d \leq 2k-3$ if $k$ is even, 
or $d \leq 2k-5$ if $k$ is odd. 
We are going to show that 
every zero-sum sequence of length $2k+d-1$ over $\ZZ_2^d$ 
contains a zero-sum subsequence of length $2k$. 
Let $x_1,x_2,\ldots,x_{2k+d-1}\in\ZZ_2^d$
where $x_1+x_2+\ldots+x_{2k+d-1}=0$. 
By \cref{th:T1}, 
$s_{d-1}(\ZZ_2^d)=2d$ if $d \equiv 1 \;{\rm mod}\; 4$, and
$s_{d-1}(\ZZ_2^d)=2d+3$ if $d \equiv 3 \;{\rm mod}\; 4$. 
In both cases, 
$s_{d-1}(\ZZ_2^d) \leq 2k+d-1$. 
Thus, there is $A \subset \{1,2,\ldots,2k+d-1\}$ such that 
$|A|=d-1$ and $\sum_{i \in A} x_i = 0$. 
Set $B := \{1,2,\ldots,2k+d-1\} \backslash A$. 
Then $|B|=2k$ and 
$\sum_{i \in B} x_i = \sum_{i=1}^{2k+d-1} x_i - \sum_{i \in A} x_i = 0 - 0 = 0$,
so $B$ points to a zero-sum subsequence of length $2k$ within 
$x_1,\ldots,x_{2k+d-1}$. 
\end{proof}

\end{document}